\def\11{{\mathbf 1}}
\def\cR{{\mathcal R}}
\def\R{{\mathbb R}}
\def\C{{\mathbb C}}
\newtheorem{theorem}{Theorem}
\newtheorem*{theorem*}{Theorem}
\newtheorem{corollary}{Corollary}
\newtheorem{fact}{Fact}
\newtheorem*{lthm}{Liouville's Theorem}
\theoremstyle{definition}
\newtheorem{def-prop}[subsubsection]{Proposition-Definition}
\newtheorem{def-theorem}[subsubsection]{Theorem-Definition}
\newtheorem{def-lem}[subsubsection]{Lemma-Definition}
\theoremstyle{remark}
\newtheorem{remark - ques}[subsubsection]{Remark/Quesion}
\theoremstyle{plain}
\begin{document}

\setcounter{tocdepth}{1} 

\title{A proof of Liouville's theorem via o-minimality}

\author{Pablo Cubides Kovacsics }
\address{Pablo Cubides Kovacsics, Universit\'e de Caen, Laboratoire de math\'ematiques Nicolas Oresme, CNRS UMR 6139,
14032 Caen cedex, France }
\email{pablo.cubides@unicaen.fr}
\thanks{${}^{\ast}$Supported by the ERC project TOSSIBERG (Grant Agreement 637027)}

\begin{abstract} In this short note we give a proof of Liouville's theorem (every bounded entire complex function is constant) following Peterzil and Starchenko's approach to complex analysis via o-minimality.  
\end{abstract}
\keywords{Liouville's theorem, complex analysis, o-minimality. \\ \emph{2000 Mathematics Subject Classification: 97I80, 03C64 (primary)}. }

\maketitle

In a series of papers \cite{peterzil-starchenko2001, peter-star2008}, Peterzil and Starchenko developed a setting in which they study analytic structures in algebraically closed fields of characteristic zero via definable sets in o-minimal expansions of real closed fields. For the case of $\R$ and $\C$, they provide several definable analogues of classic results in complex analysis. One of them is Liouville's theorem: every bounded entire complex function is constant. Their definable analogue states that every bounded entire complex function which is definable in an o-minimal expansion of $\R$, is constant. The latter result has strictly weaker assumptions than Liouville's classical statement since, by results of Peterzil and Starchenko, most entire complex functions are not definable in any o-minimal expansion of the reals (see later Theorem \ref{thm:defLiou2}). In this short note we show how to derive the full statement of Liouville's theorem still working in the o-minimal definable setting. 

\

We refer the reader to \cite{drie-1998} for definitions and basics of o-minimality. 

\section*{Complex analytic geometry via o-minimality}

Let $\cR$ be an o-minimal expansion of $(\R,+,\cdot)$. We identify $\C$ with the real plane $\R^2$, and by a $\cR$-definable subset of $\C^n$, we mean an $\cR$-definable subset of $\R^{2n}$ (with parameters). We denote by $|\cdot|\colon \C\to \R$ the usual absolute value, which is definable. The following result corresponds to the definable analogue of Liouville's theorem proved by Peterzil and Starchenko.  

\begin{theorem}[{\cite[Theorem 2.36 ]{peterzil-starchenko2001}}]\label{thm:defLiou} Let $f \colon  \C \to \C$ be a definable bounded entire function. Then $f$ is constant.
\end{theorem}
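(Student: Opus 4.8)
The plan is to transplant the classical maximum-modulus proof of Liouville's theorem into the definable holomorphic category developed by Peterzil and Starchenko, the only genuine inputs being the definable versions of the maximum modulus principle and of Riemann's removable singularity theorem for $\cR$-definable holomorphic functions. The idea is not to argue with $f$ directly --- its supremum $\sup_{\C}|f|$ need not be attained on $\C$ --- but with an auxiliary function that isolates the nonconstant part of $f$ and decays at infinity.

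Concretely, set $M=\sup_{\C}|f|<\infty$ and define $h\colon\C\setminus\{0\}\to\C$ by $h(z)=(f(z)-f(0))/z$. Since complex division is a semialgebraic operation on $\C\cong\R^2$ and $f$ is $\cR$-definable, the function $h$ is $\cR$-definable, and it is holomorphic on $\C\setminus\{0\}$ as a quotient of holomorphic functions with nonvanishing denominator. Because $f$ is differentiable at $0$, the quotient $h(z)=(f(z)-f(0))/z$ tends to $f'(0)$ as $z\to 0$ and so stays bounded near $0$; by the definable removable singularity theorem it therefore extends to an $\cR$-definable entire function, still denoted $h$, with $h(0)=f'(0)$.

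Next I would bound $h$ on circles: for $|z|=r$ one has $|h(z)|=|f(z)-f(0)|/r\le 2M/r$, using $|f(0)|\le M$. Applying the definable maximum modulus principle to $h$ on the closed disc $\overline{D}(0,r)$, every $z_0$ with $|z_0|\le r$ satisfies $|h(z_0)|\le\max_{|z|=r}|h|\le 2M/r$. Fixing $z_0$ and letting $r\to\infty$ forces $h(z_0)=0$; as $z_0$ was arbitrary, $h\equiv 0$, that is $f(z)=f(0)$ for all $z$, so $f$ is constant.

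The main obstacle is not in this elementary argument but in having the two complex-analytic tools it rests on available in the $\cR$-definable setting; once the maximum modulus principle and the removable singularity theorem are established for $\cR$-definable holomorphic functions within Peterzil and Starchenko's framework, the proof is purely classical. A variant avoiding the auxiliary $h$ would instead apply the removable singularity theorem to $z\mapsto f(1/z)$ in order to extend $f$ to a definable holomorphic function on $\PP^1(\C)$, and then conclude via the maximum modulus (equivalently, open mapping) principle together with the identity theorem on the compact sphere. I prefer the route through $h$, since it keeps all estimates on $\C$ and requires no appeal to compactness.
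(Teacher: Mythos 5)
Your proof is correct, and the two complex-analytic inputs you rely on (the definable maximum modulus principle and the definable removable singularity theorem) are indeed available in Peterzil--Starchenko's framework. Note that the paper itself does not prove Theorem \ref{thm:defLiou} --- it is quoted as \cite[Theorem 2.36]{peterzil-starchenko2001} and used as context rather than re-derived --- but your argument via the auxiliary function $h(z)=(f(z)-f(0))/z$, the bound $2M/r$ on circles, and letting $r\to\infty$ is essentially the same argument the paper uses (through its Definable Schwarz Lemma, whose proof introduces exactly the quotient $g(x)=f(x)/x$ extended by $f'(0)$ at the origin, and Corollary \ref{cor}) to deduce the full, non-definable Liouville theorem.
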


The fact that the previous theorem has strictly weaker assumptions than Liouville's classical theorem follows from the following result (and the fact that there are entire functions which are not rational functions).  

\begin{theorem}[{\cite[Theorem 1.2]{peterzil-starchenko2001}}]\label{thm:defLiou2}
Let $f \colon  \C \to \C$ be a definable entire function. Then $f$ is a rational function. 
\end{theorem}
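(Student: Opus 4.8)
The plan is to show that a definable entire function $f$ cannot have an essential singularity at infinity, and is therefore a polynomial (in particular a rational function). The combinatorial heart of the argument is the o-minimal tameness of definable sets --- specifically the frontier inequality $\dim(\overline{A}\setminus A)<\dim A$ for definable $A$ (see \cite{drie-1998}) --- played off against the classical Casorati--Weierstrass theorem, which forces a genuinely transcendental entire function to behave wildly near infinity.

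First I would pass to the function $g(z)=f(1/z)$, which is $\cR$-definable, being the composition of the definable $f$ with the rational map $z\mapsto 1/z$, and which is holomorphic on the punctured disc $\{0<|z|<1\}$. The behaviour of $f$ near $\infty$ is encoded in the singularity of $g$ at $0$: if $0$ is a removable singularity then $g$ is bounded near $0$, so $f$ is bounded near $\infty$ and hence constant; if $0$ is a pole of order $N$ then $f$ is a polynomial of degree $N$; and $f$ is transcendental entire precisely when $0$ is an essential singularity of $g$. So it suffices to exclude the essential-singularity case.

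Next I would control the cluster set $C_0=\{w\in\C : (0,w)\in\overline{\Gamma}\}$, where $\Gamma\subseteq\R^2\times\R^2$ is the graph of $g$ over the punctured disc, a definable set. Since $g$ is continuous on a two-real-dimensional domain, $\dim\Gamma=2$, so the frontier inequality gives $\dim(\overline{\Gamma}\setminus\Gamma)\le 1$. As $C_0$ is contained in the fibre of this frontier over $z=0$, and fibres of a definable set have dimension at most that of the set, we get $\dim C_0\le 1$. Being a closed definable subset of $\C\cong\R^2$ of dimension at most $1$, the set $C_0$ has empty interior, and in particular $C_0\neq\C$.

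Finally I would invoke Casorati--Weierstrass: if $0$ were an essential singularity of $g$, then the image of every punctured neighbourhood of $0$ would be dense in $\C$, forcing the closed set $C_0$ to be all of $\C$ --- contradicting $\dim C_0\le 1$. Hence $0$ is at worst a pole of $g$, and $f$ is a polynomial. The step I expect to be the main obstacle is the cluster-set estimate: one must verify that $\Gamma$ is genuinely two-dimensional and definable, that the frontier inequality and the fibre-dimension bound apply verbatim in the o-minimal setting, and that $C_0$ really coincides with the fibre of $\overline{\Gamma}$ over $0$. Once $\dim C_0$ is pinned strictly below $2$, the clash with Casorati--Weierstrass is immediate.
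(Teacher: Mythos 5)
Your argument is correct for the statement as it appears here, but note first that this paper offers no proof of Theorem \ref{thm:defLiou2} at all: it is imported wholesale from Peterzil--Starchenko, so the comparison must be with their proof rather than with anything in this note. Your individual steps check out: $g(z)=f(1/z)$ is definable and holomorphic on the punctured disc; its graph $\Gamma$ is a $2$-dimensional definable subset of $\R^4$; the frontier inequality $\dim(\overline{\Gamma}\setminus\Gamma)<\dim\Gamma$ together with the fibre-dimension bound gives $\dim C_0\le 1$, so the cluster set is a proper closed subset of $\C$; Casorati--Weierstrass then rules out an essential singularity at $0$; the removable case gives boundedness and hence constancy (via Theorem \ref{thm:defLiou}, since $f$ is definable), and the pole case gives a polynomial by the usual Cauchy estimate. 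In fact you prove the sharper conclusion that $f$ is a polynomial, which is equivalent here since an entire rational function has no poles. The difference from Peterzil--Starchenko's own proof is one of method and generality: their Theorem 1.2 is proved over an arbitrary algebraically closed field $K=R[i]$ with $R$ real closed, where Casorati--Weierstrass is not available as a classical black box. They establish essentially the same cluster-set dimension bound, but then conclude inside the definable category: choosing $w_0$ together with a ball around it disjoint from the ($\le 1$-dimensional) cluster set, the function $1/(g-w_0)$ is bounded near the puncture, and a definable removable-singularity theorem forces $g$ to have at worst a pole --- in effect they re-prove the needed fragment of Casorati--Weierstrass definably. Your route is shorter because it borrows the classical theorem, at the price of working only over the standard $\C$; since the theorem in this paper is stated only for o-minimal expansions of $(\R,+,\cdot)$, that trade-off is legitimate here, though it would not recover the cited result in its full generality.
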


It is worthy to notice that from Theorem \ref{thm:defLiou} one can still derive a proof of the fundamental theorem of algebra (every non-constant polynomial in $\C[X]$ has a root in $\C$).  

To recover Liouville's theorem we need to introduce the expansion $\cR_\text{an}$, which consists of $(\R,+,\cdot)$ together with functions $f\colon \R^n\to \R$ defined by
\[
f(x)=
\begin{cases}
g(x) & x\in [-1,1]^n\\
0 & \text{otherwise}, 
\end{cases}
\] 
where $g\colon [-1,1]^n\to \R$ is real-analytic. The following follows by classical results of Denef and van den Dries in \cite{denef-vdd-88}.  

\begin{theorem}[Denef-van den Dries]\label{thm:denef}
The structure $\cR_\text{an}$ is o-minimal and the definable sets in $\cR_{\text{an}}$ are precisely the globally subanalytic sets.
\end{theorem}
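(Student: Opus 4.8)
The plan is to establish both assertions at once by proving a quantifier-elimination theorem for $\cR_{\text{an}}$ and then reading off o-minimality and the identification of the definable sets. First I would fix a language $\cL_{\text{an}}$ obtained from the language of ordered rings by adjoining a function symbol for each restricted analytic function, together with a symbol $D$ for restricted division interpreted by $D(x,y)=x/y$ when $0<|x|\le|y|$ and $D(x,y)=0$ otherwise. The division symbol is what lets terms control behaviour at infinity, which is precisely the gap between the merely locally subanalytic and the globally subanalytic classes; every function of $\cR_{\text{an}}$ is visibly $\cL_{\text{an}}$-definable and conversely.

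The heart of the argument is the quantifier elimination: every $\cL_{\text{an}}$-formula is equivalent in $\cR_{\text{an}}$ to a quantifier-free one. The key analytic input is the Weierstrass preparation and division theorems for convergent power series. Using them I would show that, after suitable changes of variable, any $\cL_{\text{an}}$-term viewed as a function of a distinguished variable $x$ with the remaining variables as parameters is, piecewise, a unit times a polynomial in $x$ whose coefficients are again given by $\cL_{\text{an}}$-terms. Elimination of $\exists x$ then reduces to an algebraic manipulation of these Weierstrass polynomials (tracking sign changes, taking resultants, and controlling the finitely many pieces), exactly as in the purely polynomial setting, with the symbol $D$ absorbing the unbounded regime so that the analysis on $[-1,1]^n$ genuinely extends to all of $\R^n$.

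From quantifier elimination, o-minimality follows by inspecting definable subsets of $\R$: each is a finite Boolean combination of sets of the form $\{x : t(x)>0\}$ or $\{x : t(x)=0\}$ for $\cL_{\text{an}}$-terms $t$, and each such $t$ is real-analytic on a bounded interval and eventually controlled by $D$ at infinity, so it has only finitely many zeros and sign changes; hence every definable subset of $\R$ is a finite union of points and open intervals. The identification with the globally subanalytic sets then comes from two matching inclusions: the restricted analytic functions and $D$ are globally subanalytic, so every $\cR_{\text{an}}$-definable set is globally subanalytic, while conversely the coordinate compactification $x_i\mapsto x_i/\sqrt{1+x_i^{2}}$, which is $\cR_{\text{an}}$-definable, exhibits every globally subanalytic set as $\cR_{\text{an}}$-definable via the classical description of bounded subanalytic sets by restricted analytic data.

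I expect the main obstacle to be the quantifier-elimination step, and within it the passage through Weierstrass preparation in several variables with uniform control over the finitely many pieces. Geometrically this is exactly where Gabrielov's theorem of the complement enters, guaranteeing that the family of globally subanalytic sets is closed under complementation (equivalently, that projections do not leave the class); ensuring that this closure is maintained uniformly at infinity through $D$ is the delicate technical glue that must be carried through the entire elimination.
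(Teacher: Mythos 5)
The paper does not prove this statement at all: Theorem~\ref{thm:denef} is imported as a black box from Denef and van den Dries \cite{denef-vdd-88} (``The following follows by classical results of Denef and van den Dries''), so there is no in-paper argument to compare yours against. What you have written is, at the level of a plan, a faithful outline of the actual proof in that reference: one adjoins restricted division $D$ (which is already $\emptyset$-definable from the ordered field structure, so the definable sets do not change), proves quantifier elimination in the enlarged language by an induction resting on Weierstrass preparation and division, and then reads off both o-minimality and the identification with the globally (``finitely'') subanalytic sets. You are also right that the compactification $x\mapsto x/\sqrt{1+x^2}$ is what mediates between the bounded subanalytic world and the global statement. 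Be aware, though, that the entire mathematical content of the theorem is concentrated in the quantifier-elimination step you defer to: that step is essentially the whole of the cited Annals paper, so your proposal is a correct roadmap rather than a proof.

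Two specific caveats. First, your closing remark inverts the logical direction: in the Denef--van den Dries argument Gabrielov's theorem of the complement is \emph{not} an input to the elimination; closure under complementation is an automatic consequence of quantifier elimination, and obtaining a new proof of Gabrielov's theorem is precisely one of the advertised payoffs of the model-theoretic route. (There is an alternative, more geometric proof of o-minimality of $\cR_{\text{an}}$ that does take Gabrielov and \L{}ojasiewicz as input and avoids quantifier elimination entirely, but you cannot mix the two: either QE gives you the complement for free, or you assume the complement theorem and skip QE.) Second, in the o-minimality deduction the claim that a one-variable term $t$ ``is real-analytic on a bounded interval'' is too quick: terms built from the truncated function symbols (which are defined to be $0$ off $[-1,1]^n$) and from $D$ are only piecewise analytic, and you need an induction on term complexity showing that each such term is analytic on each of finitely many subintervals before you can count zeros and sign changes.
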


For $r$ a positive real number, denote by ${D}(r):=\{z\in \C: |z|\leqslant r\}$ the disc of radius $r$ in~$\C$ and by $C(r):=\{z\in \C: |z|=r\}$ the annulus of radius $r$. Set $D^\circ(r):=D(r)\setminus C(r)$. The following fact follows easily from Theorem \ref{thm:denef}. 

\begin{fact}\label{fact} Let $s$ be a positive real number and $f\colon \C\to D(s)$ be an entire function. Then, for every positive real number $r$, the restriction $f_{|D(r)}$ is definable in $\cR_\text{an}$. 
\end{fact}

\section*{Liouville's theorem}

The first step is to show a definable version of Schwarz Lemma. To that end, we will use the following particular case of the definable maximum principle from \cite{peterzil-starchenko2001}:  

\begin{theorem}[{\cite[Theorem 2.31]{peterzil-starchenko2001}}]\label{thm:max} Let $r$ be a positive real number and  $f \colon D(r)  \to \C$ be a definable continuous function which is $\C$-differentiable on $D^\circ(r)$. Then for all $z_0\in D(r)$
\[
|f(z_0)| \leq \max_{z\in C(r)} |f(z)|.
\]
\end{theorem}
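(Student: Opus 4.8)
The plan is to reduce the statement to the weak maximum principle for a subharmonic function and to prove the latter by the classical perturbation trick, keeping everything inside the definable category. Writing $f = u + iv$ with $u,v$ definable and real-valued, I would set $\phi := |f|^2 = u^2 + v^2$, a definable continuous function on $D(r)$. Since $D(r)$ is compact and $\phi$ is continuous, $\phi$ attains its maximum, and after squaring the desired inequality it suffices to show that $\phi(z_0) \leq \max_{z \in C(r)} \phi(z)$ for every $z_0 \in D(r)$.

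The key point is that $\phi$ is subharmonic on $D^\circ(r)$. Because $f$ is $\C$-differentiable there, the Cauchy--Riemann equations $u_x = v_y$ and $u_y = -v_x$ hold, so $u$ and $v$ are harmonic, and a direct computation gives
\[
\Delta \phi = 2\bigl(|\nabla u|^2 + |\nabla v|^2\bigr) = 4\,|f'|^2 \geq 0.
\]
With this in hand I would run the standard argument: for $\varepsilon > 0$ set $\phi_\varepsilon := \phi + \varepsilon\,|z|^2$, so that $\Delta \phi_\varepsilon = \Delta\phi + 4\varepsilon > 0$. A $C^2$ function with strictly positive Laplacian cannot have an interior local maximum, since at such a point its Hessian would be negative semidefinite and its trace, the Laplacian, would be $\leq 0$; hence the maximum of $\phi_\varepsilon$ over $D(r)$ is attained on $C(r)$. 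For every $z_0 \in D(r)$ this yields $\phi(z_0) \leq \phi_\varepsilon(z_0) \leq \max_{z\in C(r)} \phi_\varepsilon(z) = \max_{z\in C(r)}\phi(z) + \varepsilon r^2$, and letting $\varepsilon \to 0$ gives exactly $\phi(z_0) \leq \max_{z \in C(r)}\phi(z)$.

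The main obstacle is regularity: the hypothesis only gives that $f$ is complex differentiable at each point of $D^\circ(r)$, whereas the computation of $\Delta\phi$ and the perturbation argument require $\phi$, hence $f$, to be $C^2$. Classically one upgrades holomorphy to smoothness through the Cauchy integral formula, but here I would instead invoke o-minimality: a definable function is $C^2$ (indeed $C^k$ for every $k$) off a definable set of dimension at most $1$, and moreover the complex derivative $f'$ of a definable $\C$-differentiable function is again definable and $\C$-differentiable, so that $f \in C^\infty(D^\circ(r))$ within the Peterzil--Starchenko framework. Once this regularity is secured the argument above is purely classical. Should the regularity be awkward to control on the exceptional low-dimensional set, an alternative route is to establish a definable mean-value identity $f(z_0) = \frac{1}{2\pi}\int_0^{2\pi} f(z_0 + \rho e^{i\theta})\,d\theta$ for small circles, deduce that $|f|$ is locally constant at an interior maximum, and propagate this using definable connectedness of $D^\circ(r)$; but I expect the subharmonicity argument to be the shortest once the $C^2$ regularity is granted.
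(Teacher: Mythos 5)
The paper does not prove this statement: it is imported verbatim as Theorem 2.31 of Peterzil--Starchenko \cite{peterzil-starchenko2001}, where it is obtained by purely topological means (via the open mapping theorem for definable $K$-differentiable functions: a non-constant such function is open, so $|f|$ can have no interior maximum), an argument that works over any o-minimal expansion of a real closed field with no Laplacians or integration. So your proof must be judged on its own, and as written it has a genuine gap at the regularity step, which you correctly flag as the main obstacle but do not close. Your first proposed fix --- that a definable function is $C^2$ off a definable set of dimension at most $1$ --- is not enough: the interior maximum of $\phi_\varepsilon$ may perfectly well lie on that exceptional set, and the Hessian/trace argument needs $\phi_\varepsilon$ to be twice differentiable at the maximizing point itself; moreover the identity $\Delta\phi = 4|f'|^2$, and already the harmonicity of $u$ and $v$, presupposes $C^2$ (equality of mixed partials) at every point where you want $\Delta\phi_\varepsilon>0$. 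Generic smoothness gives you nothing at the one point where you need it.

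Your second fix does work, but you should recognize what it costs. The statement that a definable $\C$-differentiable function on an open set is automatically $C^\infty$, with definable and again $\C$-differentiable derivative, is itself a nontrivial theorem of Peterzil and Starchenko, proved in the very paper from which the maximum principle is being quoted (via their removable singularity theorem applied to difference quotients); it is not a soft consequence of cell decomposition. Once you grant it, the rest of your argument --- subharmonicity of $|f|^2$, the perturbation $\phi+\varepsilon|z|^2$, the second-derivative test at an interior maximum, and the correct separation of continuity on the closed disc from differentiability on the open disc --- is classical, correct, and complete, and it is a genuinely different (and over $\R$, arguably more elementary) route than the topological one in the source. But as an independent proof it fails exactly at the regularity upgrade, and the citation that repairs it carries most of the analytic content.
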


\begin{theorem}[Definable Schwarz Lemma] Let $f\colon D(1)\to D(1)$ be a definable $\C$-differentiable function such that $f(0)=0$. Then $|f(x)|\leq |x|$. 
\end{theorem}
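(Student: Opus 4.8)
The plan is to mimic the classical proof of the Schwarz Lemma, but replacing the use of the ordinary maximum modulus principle with its definable analogue in Theorem \ref{thm:max}. The classical argument divides $f$ by the identity to control the growth near $0$, so the first step is to define the auxiliary function
\[
g(z)=
\begin{cases}
f(z)/z & z\neq 0\\
f'(0) & z=0.
\end{cases}
\]
Since $f$ is $\C$-differentiable and $f(0)=0$, the function $g$ extends $\C$-differentiably across the origin; this is the removable-singularity phenomenon, which I would justify by writing $f(z)=z\,g(z)$ and noting that $\C$-differentiability of $f$ with $f(0)=0$ forces $g$ to be continuous at $0$ with the stated value, and $\C$-differentiable on $D^\circ(1)$. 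Crucially, $g$ is definable: it is obtained from the definable function $f$ by the definable operations of division and the (definable) assignment of a single value at $0$, so it falls within the scope of Theorem \ref{thm:max}.

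Next I would apply the definable maximum principle to $g$ on a disc $D(r)$ for $r<1$. On the circle $C(r)$ we have $|z|=r$, and since $f$ maps into $D(1)$ we get $|g(z)|=|f(z)|/r\leq 1/r$ for all $z\in C(r)$. Theorem \ref{thm:max} then yields $|g(z_0)|\leq 1/r$ for every $z_0\in D(r)$, hence for every fixed $z_0$ in the open disc, letting $r\to 1^-$ gives $|g(z_0)|\leq 1$. Unwinding the definition of $g$, this is exactly $|f(z_0)|\leq |z_0|$ for $z_0\neq 0$, and the inequality is trivial at $z_0=0$ since $f(0)=0$. This would complete the proof on the open disc, and continuity extends it to the boundary.

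The main obstacle I anticipate is verifying that $g$ satisfies the hypotheses of Theorem \ref{thm:max}, namely that $g$ is a \emph{definable continuous} function on $D(r)$ that is $\C$-differentiable on $D^\circ(r)$. Definability and $\C$-differentiability away from $0$ are immediate from those of $f$ together with the definability and holomorphy of $z\mapsto 1/z$ on $D^\circ(r)$. The delicate point is continuity (indeed $\C$-differentiability) at the origin: I must argue that the apparent singularity of $f(z)/z$ at $z=0$ is removable. Classically this is the power-series argument; in the present setting I would instead invoke $\C$-differentiability of $f$ at $0$ directly, writing $f(z)/z=(f(z)-f(0))/(z-0)\to f'(0)$ as $z\to 0$, which shows $g$ is continuous at $0$, and then a short argument (or an appeal to the fact that a continuous function that is $\C$-differentiable off a point is $\C$-differentiable everywhere) upgrades this to $\C$-differentiability. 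Once this removability is secured, the rest of the argument is a direct and routine application of Theorem \ref{thm:max} followed by taking the limit $r\to 1^-$.
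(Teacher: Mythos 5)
Your proposal is correct and follows essentially the same route as the paper: the same auxiliary function $g(z)=f(z)/z$ (with $g(0)=f'(0)$) combined with the definable maximum principle of Theorem \ref{thm:max}. The only cosmetic difference is that you apply the maximum principle on $D(r)$ for $r<1$ and let $r\to 1^-$, whereas the paper applies it directly at radius $1$; your extra care in justifying the removable singularity at $0$ is a welcome addition to a step the paper merely asserts.
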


\begin{proof}
Consider the definable function 
\[
g(x)=
\begin{cases}
\frac{f(x)}{x} & x\neq 0\\
f'(0) & x=0.\\
\end{cases}
\]
Such function is definable and $\C$-differentiable. Hence, by Theorem \ref{thm:max}, we have that 
\[
\sup_{x\in D(1)} |g(x)| \leq \sup_{|x|=1} |g(x)| = \sup_{|x|=1} |f(x)| \leq 1.  
\]
Therefore $|g(x)|\leq 1$, which gives us that $|f(x)|\leq |x|$. 
\end{proof}

\begin{corollary}\label{cor} Let $r$ and $s$ be two positive real numbers and $f\colon D(r)\to D(s)$ be a definable $\C$-differentiable function such that $f(0)=0$. Then $|f(x)|\leq \frac{s}{r}|x|$. 
\end{corollary}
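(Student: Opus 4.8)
The plan is to reduce Corollary \ref{cor} to the Definable Schwarz Lemma by a pair of rescalings that normalize both the domain disc and the codomain disc to unit radius. The key observation is that the Schwarz Lemma is stated only for $D(1)\to D(1)$, so I must manufacture a function on $D(1)$ taking values in $D(1)$ out of the given $f\colon D(r)\to D(s)$.

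First I would define an auxiliary function $h\colon D(1)\to D(1)$ by $h(w):=\tfrac{1}{s}f(rw)$. The point $rw$ ranges over $D(r)$ as $w$ ranges over $D(1)$, so $f(rw)$ is defined and lands in $D(s)$; dividing by $s$ then lands in $D(1)$, so $h$ is well-defined as a map $D(1)\to D(1)$. Since $f(0)=0$ we get $h(0)=0$. I would check that $h$ is definable (it is a composition of $f$ with the $\cR$-definable scalar multiplications $w\mapsto rw$ and $z\mapsto z/s$) and $\C$-differentiable (the chain rule: $h'(w)=\tfrac{r}{s}f'(rw)$, and scalar multiplication by a real constant is $\C$-linear, hence $\C$-differentiable).

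Next I would apply the Definable Schwarz Lemma to $h$, yielding $|h(w)|\leq |w|$ for all $w\in D(1)$. The final step is to substitute back: given any $x\in D(r)$, set $w:=x/r\in D(1)$, so that $h(w)=\tfrac{1}{s}f(x)$ and the inequality reads $\tfrac{1}{s}|f(x)|\leq \tfrac{1}{r}|x|$. Rearranging gives $|f(x)|\leq \tfrac{s}{r}|x|$, which is exactly the claim.

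I do not expect any genuine obstacle here, since the argument is a routine change of variables. The only point requiring a little care is the verification that $h$ inherits definability and $\C$-differentiability from $f$ — but these are preserved under composition with the $\RR$-linear (indeed $\C$-linear) scaling maps, so the hypotheses of the Schwarz Lemma are met. The mild subtlety worth stating explicitly is that $\C$-differentiability is not destroyed by multiplying by the real constants $r$ and $1/s$, precisely because real-scalar multiplication commutes with the Cauchy--Riemann structure.
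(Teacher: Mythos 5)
Your proposal is correct and is exactly the intended argument: the paper states the corollary without proof, treating it as immediate from the Definable Schwarz Lemma via the rescaling $h(w)=\tfrac{1}{s}f(rw)$, which is precisely what you carry out. Your explicit checks that $h$ remains definable and $\C$-differentiable under the real-scalar changes of variable are the right points to verify and are handled correctly.
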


\begin{lthm} Let $f\colon  \C\to \C$ be an bounded entire function. Then $f$ is constant. 
\end{lthm}

\begin{proof} Consider the function $g(x)=f(x)-f(0)$. Clearly $g$ is also bounded and $g(0)=0$. Since $g$ is bounded, let $s$ be a positive real number such that $g\colon \C\to D(s)$. Now consider for every real number $r>0$ the restriction $g_r:=g_{|D(r)}$. By Fact \ref{fact}, for every $r>0$, the function~$g_r$ is definable in $\mathcal{R}_{an}$ and $\C$-differentiable by assumption. Hence, by Corollary \ref{cor}, we have that for all $x\in D(r)$, $|g_r(x)|\leq \frac{s}{r}|x|$. Letting $r$ go to infinity, we have that $g(x)=0$ for all $x\in \C$. Therefore, $f(x)=f(0)$ for all $x$. 
\end{proof}

\subsection*{Acknoledgements} I would like to thank the organizers of the JAVA (Jeunes en arithm\'etique et vari\'et\'es alg\'ebriques) meeting in 2016, as the ideas of this note emerged there.

\end{document}